\renewcommand{\cite}{\citep}
\newtheorem{theorem}{Theorem}
\newtheorem{lemma}[theorem]{Lemma}
\newtheorem{definition}[theorem]{Definition}
\newcommand{\sA}{\mathcal{A}}
\DeclareMathOperator{\rank}{rank}
\DeclareMathOperator{\tr}{trace}
\DeclareMathOperator{\sign}{sign}
\newcommand{\algo}[1]{\textsc{\lowercase{#1}}}
\newenvironment{sbmatrix}{\left[\!\begin{smallmatrix}}
{\end{smallmatrix}\!\right]}
\newcommand{\itn}[1]{^{(#1)}}
\newcommand{\eps}{\varepsilon}
\newcommand{\mat}{\boldsymbol}
\renewcommand{\vec}[1]{\boldsymbol{\mathrm{#1}}}
\providecommand{\mYhat}{\ensuremath{\mat{\hat{\mY}}}}
\providecommand{\mSigma}{\ensuremath{\mat{\Sigma}}}
\providecommand{\mA}{\ensuremath{\mat{A}}}
\providecommand{\mB}{\ensuremath{\mat{B}}}
\providecommand{\mD}{\ensuremath{\mat{D}}}
\providecommand{\mE}{\ensuremath{\mat{E}}}
\providecommand{\mK}{\ensuremath{\mat{K}}}
\providecommand{\mR}{\ensuremath{\mat{R}}}
\providecommand{\mS}{\ensuremath{\mat{S}}}
\providecommand{\mT}{\ensuremath{\mat{T}}}
\providecommand{\mU}{\ensuremath{\mat{U}}}
\providecommand{\mV}{\ensuremath{\mat{V}}}
\providecommand{\mW}{\ensuremath{\mat{W}}}
\providecommand{\mX}{\ensuremath{\mat{X}}}
\providecommand{\mY}{\ensuremath{\mat{Y}}}
\providecommand{\vb}{\ensuremath{\vec{b}}}
\providecommand{\ve}{\ensuremath{\vec{e}}}
\providecommand{\vs}{\ensuremath{\vec{s}}}
\providecommand{\vx}{\ensuremath{\vec{x}}}
\providecommand{\vz}{\ensuremath{\vec{z}}}
\DeclareMathOperator*{\minimize}{minimize}
\DeclareMathOperator{\subjectto}{subject\ to}
\providecommand{\MINof}[1][]{{\displaystyle \minimize_{#1}}}
\providecommand{\MIN}[2][]{\begin{array}{ll} \MINof[#1] & #2 \\ \end{array}}
\providecommand{\MINone}[3]{\begin{array}{ll} \MINof[#1] & #2 \\ \subjectto  & #3 \end{array}}
\providecommand{\MINtwo}[4]{\begin{array}{ll} \MINof[#1] & #2 \\ \subjectto  & #3 \\ & #4 \end{array}}
\newcommand{\sstretchsym}[3]{\ensuremath{\left#1 #3 \right#2}}
\newcommand{\nstretchsym}[3]{\ensuremath{#1 #3 #2}}
\newcommand{\bstretchsym}[3]{\ensuremath{\bigl#1 #3 \bigr#2}}
\newcommand{\Bstretchsym}[3]{\ensuremath{\Bigl#1 #3 \Bigr#2}}
\newcommand{\hstretchsym}[3]{\ensuremath{\biggl#1 #3 \biggr#2}}
\newcommand{\Hstretchsym}[3]{\ensuremath{\Biggl#1 #3 \Biggr#2}}
\newcommand{\newstrechsymset}[4][s]{%
	\expandafter\def\csname s#2of\endcsname{\sstretchsym{#3}{#4}}%
	\expandafter\def\csname n#2of\endcsname{\nstretchsym{#3}{#4}}%
	\expandafter\def\csname b#2of\endcsname{\bstretchsym{#3}{#4}}%
	\expandafter\def\csname B#2of\endcsname{\Bstretchsym{#3}{#4}}%
	\expandafter\def\csname h#2of\endcsname{\hstretchsym{#3}{#4}}%
	\expandafter\def\csname H#2of\endcsname{\Hstretchsym{#3}{#4}}%
	\expandafter\def\csname #2of\endcsname{\csname #1#2of\endcsname}%
}
\newcommand{\normof}[2][]{\sstretchsym{\|}{\|}{#2}_{#1}}
\newcommand{\nnormof}[2][]{\nstretchsym{\|}{\|}{#2}_{#1}}
\def\clap#1{\hbox to 0pt{\hss#1\hss}}
\def\mathclap{\mathpalette\mathclapinternal}
\def\mathclapinternal#1#2{%
  \clap{$\mathsurround=0pt#1{#2}$}}
\begin{document} 

\title{Rank Aggregation via Nuclear Norm Minimization}

\numberofauthors{2} 

\author{
% You can go ahead and credit any number of authors here,
% e.g. one 'row of three' or two rows (consisting of one row of three
% and a second row of one, two or three).
%
% The command \alignauthor (no curly braces needed) should
% precede each author name, affiliation/snail-mail address and
% e-mail address. Additionally, tag each line of
% affiliation/address with \affaddr, and tag the
% e-mail address with \email.
%
% 1st. author
\alignauthor
David F.~Gleich\\
       \affaddr{Sandia National Laboratories\thanks{{\crnotice
       Sandia National Laboratories is a multi-program laboratory
       managed and operated by Sandia Corporation, a wholly owned
       subsidiary of Lockheed Martin Corporation, for the U.S.
       Department of Energy's National Nuclear Security Administration
       under contract DE-AC04-94AL85000.}}}\\
       \affaddr{Livermore, CA}\\
       %\affaddr{Dublin, Ohio 43017-6221}\\
       \email{dfgleic@sandia.gov}
\alignauthor
Lek-Heng Lim\\
			 \affaddr{University of Chicago}\\
			 \affaddr{Chicago, IL}\\
			 \email{lekheng@galton.chicago.edu}
}

\maketitle

\begin{abstract} 
The process of rank aggregation is intimately intertwined with
the structure of skew-symmetric matrices.  
We apply recent advances in the theory and algorithms of matrix completion
to skew-symmetric matrices.  This combination of ideas
produces a new method for ranking a set of items.  The essence
of our idea is that a rank aggregation describes a partially
filled skew-symmetric matrix.  We extend an algorithm for
matrix completion to handle skew-symmetric data and use that
to extract ranks for each item.  
Our algorithm applies to both pairwise comparison and rating data.
Because it is based on matrix completion, it is robust to
both noise and incomplete data.  We show a formal
recovery result for the noiseless case and 
present a detailed study of the algorithm on synthetic
data and Netflix ratings.
\end{abstract} 

\keywords{nuclear norm, skew symmetric, rank aggregation}

\section{Introduction}

% Review the connection between skew-symmetric matrices
% and ranking data.  Review the importance of ranking.
% While ranking a large set of items may not have an exact
% solution, it is still a problem people care about.  After all,
% what is the best movie or best wine of 2009 is a question
% that many try to answer, despite the variety of sub-categories
% within the comparison (action movies, romantic comedies, etc. / 
% red wines, white wines, dessert wines)

% Our contribution
% Link the theory of algorithms for matrix completion to the
% problem of ranking.  

One of the classic data mining problems is to identify the
important items in a data set; see \citet{Tan-2004-ordering}
for an interesting example of how these might be used. For this task, 
we are concerned with rank aggregation.
Given a series of votes on a set of items by a group of voters,
rank aggregation is the process of permuting the set of items so that
the first element is the best choice in the set, the second
element is the next best choice, and so on.  
In fact, rank aggregation is an old problem and has a history stretching 
back centuries~\cite{condorcet1785-essai}; one famous 
result is that any rank aggregation requires some 
degree of compromise~\cite{arrow1950-impossibility}.  
Our point in this introduction
is not to detail a history of all the possible methods of
rank aggregation, but to give some perspective on our approach to
the problem.

Direct approaches involve finding a permutation
explicitly -- for example, computing the Kemeny optimal
ranking \cite{kemeny1959-math-without-numbers} or
the minimum feedback arc set problem.  
These problems are NP-hard 
\cite{dwork2001-rank-aggregation,ailon2005-ranking,%
alon2006-ranking}.
% Computing the Kemeny optimal ranking for more than
% four lists of preferences is NP-hard 
% \cite{dwork2001-rank-aggregation}.  Likewise, the 
% minimum feedback arc set problem is NP-hard even when
% the set of comparisons forms a tournament \cite{ailon2005-ranking,%
% alon2006-ranking}.
An alternate approach is to assign a score to 
each item, and then compute a permutation based on
ordering these items by their score, e.g.\ \citet{saaty1987-perron}.
In this manuscript, we focus on the second approach.  A
key advantage of the computations we propose
is that they are \emph{convex} problems and efficiently
solvable.

While
the problem of rank aggregation is old, modern applications --
such as those found in web-applications like Netflix and Amazon --
pose new challenges. 
First, the data collected are usually
cardinal measurements on the quality of each
item, such as  1--5 stars, received from voters.
%First, most of the data collected
% is in the form of ratings by voters.  These are 
% cardinal measurements on the quality of each items.
% A common choice is the scale of 1--5 stars.
Second, the voters are neither experts in the rating domain nor
experts at producing useful ratings.  These properties
manifest themselves in a few ways, including skewed
and indiscriminate voting behaviors \cite{ho2008-ratings}.
We focus on using aggregate pairwise 
data about items to develop a score for each item that
predicts the pairwise data itself.  This approach
eliminates some of the issues with directly utilizing
voters ratings, and we argue this point more precisely
in Section~\ref{sec:pairwise}.

% The problem we explore is thus simply stated. 
% Given votes or ratings by a group
% of voters or users on a set of items, produce an ordered
% list of the items that respects the voters preferences.

To explain our method, consider a set of $n$ items, labeled from $1$ to $n$.  
Suppose that each of these
items has an unknown intrinsic quality $s_i : 1 \le i \le n$,
where $s_i > s_j$ implies that item $i$ is better
than item $j$.  While the $s_i$'s are unknown, 
suppose we are given
a matrix $\mY$ where $Y_{ij} = s_i - s_j$.  By 
finding a rank-2 factorization of $\mY$, for example
\begin{equation}
\mY = \vs \ve^T - \ve \vs^T,
\end{equation}
we can extract unknown scores.  The matrix $\mY$
is  skew-symmetric and describes any score-based global pairwise
ranking.  (There are other possible rank-2 factorizations
of a skew-symmetric matrix, 
a point we return to later in Section~\ref{sec:mc-algs}).

% This methodology underlies recent developments in the 
% application of Hodge theory to rank aggregation
%  \cite{jiang2008-statistical-ranking}.
% By analogy with the Hodge decomposition of a vector space,
% they propose a decomposition of pairwise rankings into
% \emph{consistent}, \emph{globally inconsistent}, and \emph{locally inconsistent}
% pieces. 

Thus, given a measured $\mYhat$, the goal is to find
a minimum rank approximation of $\mYhat$ that models the elements, and ideally
one that is rank-$2$.  Phrased
in this way, it is a natural candidate for recent developments in the
theory of matrix completion~\cite{candes2009-matrix-completion,recht2009-nuclear-norm}.
In the matrix
 completion problem, certain elements of the matrix are presumed to be known.
 The goal is to produce a low-rank matrix that respects these elements -- or at
  least minimizes the deviation from the known elements.  One catch, 
 however, is that we require
 matrix completion over skew-symmetric matrices for pairwise
ranking matrices.  Thus, we must
solve the matrix completion problem inside a structured class of 
matrices.  This task is a novel contribution of our work.
Recently, \citet{Gross2010-low-rank} also developed a technique
for matrix completion with Hermitian matrices.

With a ``completed'' matrix $\mY$, the norm of the residual $\nnormof{ \mYhat - \mY }$
gives us a certificate for the validity of our fit -- an additional piece of
information available in this model.

To continue, we briefly summarize our main contributions and 
our notational conventions.  

\paragraph{Our contributions}
\begin{compactitem}
 \item We propose a new method for computing a rank aggregation based on 
 matrix completion, which is tolerant to noise and incomplete data.
 \item We solve a structured matrix-completion problem
 over the space  of skew-symmetric matrices.
 \item We prove a recovery theorem detailing when our approach
 will work.
 \item We perform a detailed evaluation of our approach with
   synthetic data and an anecdotal study with Netflix ratings.
\end{compactitem}

\paragraph{Notation} We try to follow standard
notation conventions.  Matrices are bold, upright roman letters, vectors
are bold, lowercase roman letters, and scalars are unbolded roman or Greek
letters.  The vector $\ve$ consists of all ones,
and the vector $\ve_i$ has a $1$ in the $i$th position and $0$'s
elsewhere.
Linear maps on matrices are written as script letters.  
An index set $\Omega$
is a group of index pairs.  Each $\omega \in \Omega$
is a pair $(r,s)$ and we assume that the $\omega$'s are
numbered arbitrarily, i.e. $\Omega = \{ \omega_1, \ldots, \omega_k \}$.
Please refer to Table~\ref{tab:notation} for reference.

\begin{table}
\caption{Notation for the paper.}
\label{tab:notation}
\setlength{\tabcolsep}{1ex}
\renewcommand{\arraystretch}{1.2}
\begin{tabularx}{\linewidth}{cX}
\toprule
  \textbf{Sym.} & \textbf{Interpretation} 
\\ \midrule
   $\sA(\cdot)$ & a linear map from a matrix to a vector 
\\ $\ve$ & a vector of all ones
\\ $\ve_i$ & a vector with $1$ in the $i$th entry, 0 elsewhere
\\ $\normof[*]{\cdot}$ & the nuclear norm
\\ $\mR$ & a rating matrix (voters-by-items)
\\ $\mY$ & a fitted or model pairwise comparison matrix 
\\ $\mYhat$ & a measured pairwise comparison matrix
\\ $\Omega$ & an index set for the known entries of a matrix
\\ \bottomrule
\end{tabularx}
\end{table}

\begin{table*}
\caption{The top 15 movies from Netflix generated by our ranking method 
(middle and right).  The left list is the ranking using the
mean rating of each movie and is
emblematic of the problems global ranking methods face when
infrequently compared items rocket to the top.  We prefer the middle and
right lists.  See Section~\ref{sec:results} and Figure~\ref{fig:netflix-residuals}
for information about the conditions and additional discussion.  
LOTR III appears twice because of the 
two DVDs editions, theatrical and extended.}
\label{tab:netflix-top-movies}
\begin{tabularx}{\linewidth}{XXX}
\toprule
		Mean	& 		Log-odds (all)	&		Arithmetic Mean (30)
\\ \midrule		
		  LOTR III: Return \ldots	&		LOTR III: Return \ldots	&		LOTR III: Return \ldots
\\		LOTR I: The Fellowship \ldots	&		LOTR I: The Fellowship \ldots	&		LOTR I: The Fellowship \ldots
\\		LOTR II: The Two \ldots	&		LOTR II: The Two \ldots	&		LOTR II: The Two \ldots
\\		Lost: Season 1	&		Star Wars V: Empire \ldots	&		Lost: S1
\\		Battlestar Galactica: S1	&		Raiders of the Lost Ark	&		Star Wars V: Empire \ldots
\\		Fullmetal Alchemist	&		Star Wars IV: A New Hope	&		Battlestar Galactica: S1
\\		Trailer Park Boys: S4	&		Shawshank Redemption	&		Star Wars IV: A New Hope
\\		Trailer Park Boys: S3	&		Star Wars VI: Return ...	&		LOTR III: Return \ldots
\\		Tenchi Muyo! \ldots	&		LOTR III: Return \ldots	&		Raiders of the Lost Ark
\\		Shawshank Redemption	&		The Godfather	&		The Godfather
\\		Veronica Mars: S1	&		Toy Story	&		Shawshank Redemption
\\		Ghost in the Shell: S2	&		Lost: S1	&		Star Wars VI: Return ...
\\		Arrested Development: S2	&		Schindler's List	&		Gladiator
\\		Simpsons: S6	&		Finding Nemo	&		Simpsons: S5
\\		Inu-Yasha	&		CSI: S4	&		Schindler's List
\\ \bottomrule
\end{tabularx}
\end{table*}

Before proceeding further, let us outline the rest of the paper.
First, Section~\ref{sec:pairwise} describes a few 
methods to take voter-item ratings and produce an aggregate 
pairwise comparison matrix.  Additionally, we argue 
why pairwise aggregation is
a superior technique when the goal is to produce
an ordered list of the alternatives.  
Next, in Section~\ref{sec:ranking-nn}, we describe  
formulations of the noisy matrix completion problem 
using the nuclear norm.
In our setting, the 
\algo{lasso} formulation is the best choice, and we use
it throughout the remainder.  We 
briefly describe algorithms for matrix completion and focus
on the \algo{svp} algorithm  \cite{Jain-2010-SVP} 
in Section~\ref{sec:mc-algs}.
We then show that the \algo{svp} algorithm preserves skew-symmetric structure.
This process
involves studying the singular value decomposition of skew-symmetric
matrices.  Thus, by the end of the section, we've shown how to
formulate and solve for a scoring vector based on the nuclear norm.
The following sections describe alternative approaches and show our
recovery results.  At the end, we show our experimental results.
In summary, our overall methodology is 
\begin{center}
Ratings \rlap{($= \mR$)} \\
$\Downarrow$ \rlap{(\S \ref{sec:pairwise})}\\
 Pairwise comparisons \rlap{($= \mY$)}\\
$\Downarrow$ \rlap{(\S \ref{sec:ranking-nn})}\\
Ranking scores \rlap{($= \vs$)} \\
$\Downarrow$ \rlap{(sorting)} \\
Rank aggregations.
\end{center}
An example of our rank aggregations 
is given in Table~\ref{tab:netflix-top-movies}.  We comment
further on these in Section~\ref{sec:netflix}.

Finally, we provide our computational and experimental
codes so that others may reproduce our results:\\
\url{https://dgleich.com/projects/skew-nuclear}

% the point is not to introduce the entire field of ranking, but
% give some perspective on the problem we tackle.  First, we need
% to introduce the relationship between 

\section{Pairwise Aggregation Methods}
\label{sec:pairwise}

To begin, we describe methods to aggregate the votes
of many voters, given by the matrix $\mR$, 
into a measured pairwise comparison matrix $\mYhat$.  
These methods have been well-studied in statistics
\cite{david1988-paired}.
In the next
section, we show how to extract a score for each item from the 
matrix $\mYhat$.

Let $\mR$ be a voter-by-item matrix.  This matrix has $m$ rows corresponding
to each of the $m$ voters and $n$ columns corresponding to the $n$ items
of the dataset.  In all of the applications we explore, the matrix 
$\mR$ is highly incomplete.  That is, only a few items are rated by
each voter.  Usually all the items have a few votes, but there is
no consistency in the number of ratings per item.

%Furthermore, humans have difficult with accurate preference judgment on even moderately
%large sets. 
Instead of using $\mR$ directly, we compute a pairwise aggregation.  
Pairwise comparisons have a lengthy history, dating back to the 
first half of the previous century \cite{Kendall-1940-paired-comparison}.
They also have many nice properties.
First, \citet{miller1956-seven} observes that most people can evaluate only
5 to 9 alternatives at a time.  This fact may relate to the common choice
of a $5$-star rating (e.g.\ the ones used by Amazon, eBay, 
Netflix, YouTube).  Thus, comparing pairs of
movies is easier than ranking a set of $20$ movies.  Furthermore,
only pairwise comparisons are possible in certain settings
such as tennis tournaments. 
Pairwise comparison methods are thus natural for analyzing ranking data.
Second, pairwise comparisons are a relative measure and help reduce 
bias from the rating scale. For these reasons, pairwise
comparison methods have been popular in psychology, statistics, and social
choice theory \cite{david1988-paired,arrow1950-impossibility}. Such
methods have also been adopted by the learning to rank community;
see the contents of \citet{li2008-learning-to-rank}.
A final advantage of pairwise methods is that they are much more complete
than the ratings matrix.  For Netflix, $\mR$ is 99\% incomplete, whereas
$\mY$ is only 0.22\% incomplete and most entries are supported by
\emph{many} comparisons.  See Figure~\ref{fig:pairwise} for information
about the number of pairwise comparisons in Netflix and MovieLens.

More critically, an incomplete array of user-by-product ratings
is a strange matrix -- not every 2-dimensional array of
numbers is best viewed as a matrix --
and using the rank of this matrix (or its convex relaxation) as a key
feature in the modeling needs to be done with care.  Consider,
if instead of rating values 1 to 5, 0 to 4 are used to
represent the exact same information, the rank of this
new rating matrix will change.
Furthermore, whether we use a rating scale where 1 is the best
rating and 5 is worst, or one where 5 is the best
and 1 is the worst, a low-rank model would give the
exact same fit with the same input values, even
though the connotations of the numbers is reversed.

On the other hand, the pairwise ranking matrix that
we construct below is invariant under monotone transformation
of the rating values and depends only on the degree of
relative preference of one alternative over another.
It circumvents the previously mentioned pitfalls and is
a more principled way to employ a rank/nuclear norm model.

We now describe five techniques to build an aggregate pairwise
matrix $\mYhat$ from the rating matrix $\mR$.  
Let $\alpha$ denote the index of a voter, and $i$ and $j$ the 
indices of two items.  The entries of $\mR$ are $R_{\alpha i}$.
To each voter, we associate a pairwise comparison matrix $\mYhat^\alpha$.
The aggregation
is usually computed by something like a mean over $\mYhat^\alpha$.

\begin{compactenum}
\item \textbf{Arithmetic mean of score differences}\quad The score difference
is $Y^{\alpha}_{ij} = R_{\alpha j}-R_{\alpha i}$. The arithmetic mean
of all voters who have rated both $i$ and $j$ is
\[
\hat{Y}_{ij}=\frac{\sum_{\alpha}(R_{\alpha i}-R_{\alpha j})}{\#\{\alpha\mid
R_{\alpha i},R_{\alpha j}\text{ exist}\}}.
\]
These comparisons  are translation invariant.

\item \textbf{Geometric mean of score ratios}\quad Assuming $\mR>0$, the score ratio
refers to $Y^{\alpha}_{ij} = R_{\alpha j}/R_{\alpha i}$. The (log) geometric
mean over all voters who have rated both $i$ and $j$ is
\[
\hat{Y}_{ij}=\frac{\sum_{\alpha}(\log R_{\alpha i} -\log R_{\alpha j}
)}{\#\{\alpha\mid R_{\alpha i},R_{\alpha j}\text{ exist}\}}.
\]
These are  scale invariant.

\item \textbf{Binary comparison}\quad Here $Y^{\alpha}_{ij} = \operatorname*{sign}%
(R_{\alpha j} - R_{\alpha i} )$. Its average is the probability difference
that the alternative $j$ is preferred to $i$ than vice versa
\[
\hat{Y}_{ij}=\Pr\{\alpha\mid R_{\alpha i}>R_{\alpha k}\}-\Pr\{\alpha\mid
R_{\alpha i}<R_{\alpha j}\}.
\]
These are invariant to a monotone transformation.

\item \textbf{Strict binary comparison}\quad This method is almost the same
as the last method, except that we eliminate cases where users rated
movies equally.  That is, 
\[ \hat{Y}^\alpha_{ij} = \begin{cases} 1 & R_{\alpha i} > R_{\alpha j} \\
                                 - & R_{\alpha i} = R_{\alpha j} \\
                                 -1 & R_{\alpha i} < R_{\alpha j}.
                   \end{cases} \]
Again, the average $Y_{ij}$ has a similar interpretation to
binary comparison, but only among people who expressed a strict 
preference for one item over the other.  
Equal ratings are ignored.

\item \textbf{Logarithmic odds ratio}\quad This idea translates
binary comparison to a logarithmic scale:
\[
\hat{Y}_{ij}=\log\frac{\Pr\{\alpha\mid R_{\alpha i}\geq R_{\alpha j}\}}%
{\Pr\{\alpha\mid R_{\alpha i}\leq R_{\alpha j}\}}.
\]
%These are also invariant to a monotone transformation.
\end{compactenum}

\begin{figure*}
\centering
\subfigure[MovieLens - 85.49\% of total pairwise comparisons]{%
  \includegraphics[width=0.45\linewidth]{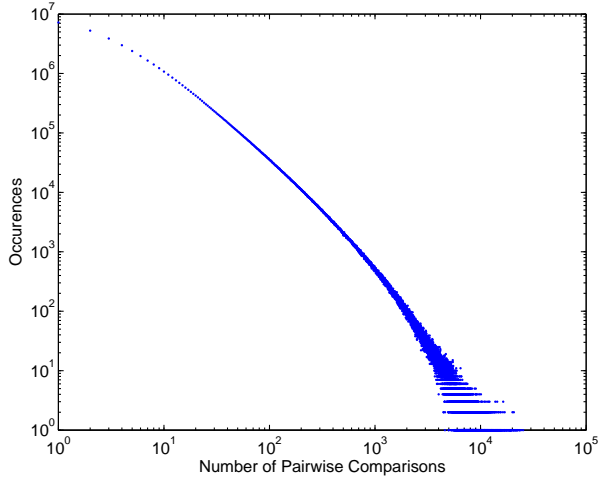}%
}  
\quad 
\subfigure[Netflix - 99.77\% of total pairwise comparisons]{%
  \includegraphics[width=0.45\linewidth]{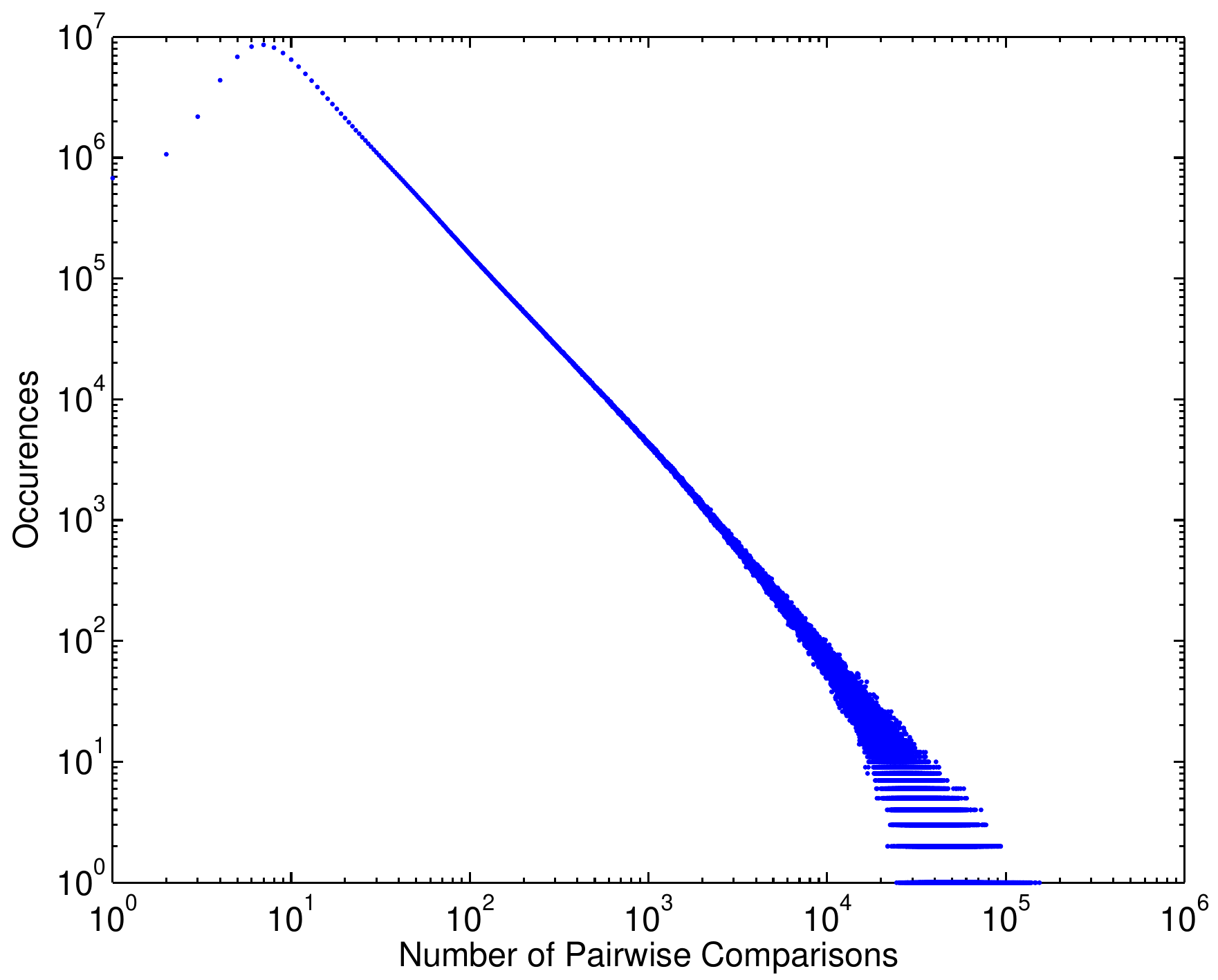}%
}  
\caption{A histogram of the number of pairwise comparisons 
between movies in MovieLens (left) and Netflix (right).  The 
number of pairwise comparisons is the number of users with ratings
on both movies.  These histograms show that most items
have more than a small number of comparisons between them.  
For example, 18.5\% and 34.67\% of all possible pairwise
entries have more than 30 
comparisons between them.  Largely speaking, this figure
justifies dropping infrequent ratings from the comparison.
This step allows us to take advantage of the ability
of the matrix-completion methods to deal with incomplete
data. }
\label{fig:pairwise}
\end{figure*}

\section{Rank Aggregation with the\\ Nuclear Norm} 
\label{sec:ranking-nn}

Thus far, we have seen how to compute an aggregate
pairwise matrix $\mYhat$ from ratings data.
While $\mYhat$ has fewer missing entries than $\mR$ --
roughly 1-80\% missing instead of almost 99\% missing
-- it is still not nearly complete.  
In this section, we discuss how to use the theory
of matrix completion to estimate the scoring
vector underlying the comparison matrix $\mYhat$.
These same techniques apply even when $\mYhat$ is not
computed from ratings and is measured through
direct pairwise comparisons.  

Let us now state the matrix completion problem formally
\cite{candes2009-exact-completion,recht2009-nuclear-norm}.
Given a matrix $\mA$ where only a subset of the entries
are known, the goal is to find the 
lowest rank matrix $\mX$ that agrees with $\mA$ in all
the non-zeros.  
Let $\Omega$ be the index set corresponding
to the known entries of $\mA$.
Now define $\sA(\mX)$ as a linear map corresponding to the
elements of $\Omega$, i.e. $\sA(\mX)$ is a vector where
the $i$th element is defined to be
\begin{equation}
[\sA(\mX)]_i = X_{\omega_i},
\end{equation}
and where we interpret $X_{\omega_i}$ as the entry of the 
matrix $\mX$ for the index pair $(r,s) = \omega_i$.
Finally, let $\vb = \sA(\mY)$ be the values of the specified 
entries of the matrix $\mY$.  
This idea of matrix completion corresponds with the
solution of 
\begin{equation} \label{eq:mc-rank}
\MINone{}{\rank(\mX)}{\sA(\mX) = \vb.}
\end{equation}
Unfortunately, like the direct
methods at permutation minimization, this approach 
is NP-hard \cite{vandenberghe1996-semidefinite}.

To make the problem tractable, an increasingly well-known technique is
to replace the $\rank$ function with the nuclear norm
 \cite{fazel2002-phdthesis}.
For a matrix $\mA$, the nuclear norm is defined 
\begin{equation}
\normof[*]{\mA} = \sum_{i=1}^{\mathclap{\rank(\mA)}} \sigma_i(\mA)
\end{equation}
where $\sigma_i(\mA)$ is the $i$th singular value of
$\mA$.
The nuclear norm has a few other names:
the Ky-Fan $n$-norm, the Schatten $1$-norm,
and the trace norm (when applied to symmetric matrices),
but we will just use the term nuclear norm here.
It is a convex underestimator of the
rank function on the unit spectral-norm ball $\{ \mA : \sigma_{\max}(\mA) \le 1 \}$, i.e.\ $\nnormof[*]{\mA} \le \rank(\mA) \sigma_{\max}(\mA)$
and is the largest convex function with this property.
Because the nuclear norm is convex,
\begin{equation} \label{eq:mc-nn}
\MINone{}{\normof[*]{\mX}}{\sA(\mX) = \vb}
\end{equation}
is a convex relaxation of \eqref{eq:mc-rank}
analogous to how the $1$-norm is a convex
relaxation of the $0$-norm.  

In \eqref{eq:mc-nn}
we have $\sA(\mX) = \vb$, which is called a
noiseless completion problem.  Noisy completion
problems only require $\sA(\mX) \approx \vb$.  
We present four possibilities inspired by similar
approaches in compressed sensing.
For the compressed sensing problem with noise:
\[
\minimize \nnormof[1]{\vx} \quad \subjectto \; \mA \vx \approx \vb
\]
there are four well known formulations: 
\algo{Lasso} \cite{tibshirani1996-lasso}, 
\algo{QP}  \cite{chen1998-atomic}, 
\algo{DS}  \cite{candes2007-dantzig}
and \algo{BPDN}  \cite{fuchs2004-noise}.  
% These are:
% \begin{center}
% \begin{minipage}{\linewidth}
% \makebox[4em][l]{\textsc{lasso}} \citet{tibshirani1996-lasso} \\[1ex]
% $\MINone{}{\nnormof[2]{\mA \vx - \vb}}{%
%                       \nnormof[1]{\vx} \le \tau}$
% \end{minipage}\\[2ex]
% \begin{minipage}{\linewidth}
% \makebox[4em][l]{\textsc{qp}} \citet{chen1998-atomic} \\[1ex]
% $\MIN{}{\nnormof[2]{\mA \vx - \vb} + 
%                       \lambda \nnormof[1]{\vx}}$ 
% \end{minipage}\\[2ex]
% \begin{minipage}{\linewidth}
% \makebox[4em][l]{\textsc{ds}} \citet{candes2007-dantzig} \\[1ex]
%  $\MINone{}{\nnormof[1]{\vx}}{%
%   \nnormof[\infty]{\mA^T(\mA \vx - \vb)} \le \mu }$ 
% \end{minipage}
% \begin{minipage}{\linewidth}
% \makebox[4em][l]{\textsc{bpdn}} \citet{vanderberg2008-spgl1} \\[1ex]
% $\MINone{}{\nnormof[1]{\vx}}{%
%                       \nnormof[2]{\mA \vx - \vb} \le \sigma}$
% \end{minipage}\\[2ex]
% \end{center}
For the noisy matrix completion problem, the same variations apply,
but with the nuclear norm taking the place of the $1$-norm:
\begin{center}
\begin{minipage}{\linewidth}
\makebox[4em][l]{\textsc{lasso}} \\[1ex]
$\MINone{}{\normof[2]{\sA(\mX) - \vb}}{\normof[*]{\mX} \le \tau}$
\end{minipage}\\[2ex]
\begin{minipage}{\linewidth}
\makebox[4em][l]{\textsc{ds}}  \\[1ex]
$\MINone{}{\normof[*]{\mX}}{\sigma_{\max}(\sA^*(\sA(\mX) - \vb))\le \mu}$
\end{minipage}\\[2ex]
\begin{minipage}{\linewidth}
\makebox[4em][l]{\textsc{qp}} \citet{mazumder2009-regularization} \\[1ex]
$\MIN{}{\normof[2]{\sA(\mX) - \vb}^2 + \lambda \normof[*]{\mX}}$
\end{minipage}\\[2ex]
\begin{minipage}{\linewidth}
\makebox[4em][l]{\textsc{bpdn}} \citet{mazumder2009-regularization} \\[1ex]
$\MINone{}{\normof[*]{\mX}}{\normof[2]{\sA(\mX) - \vb}\le \sigma}$
\end{minipage}
\end{center}

Returning to rank-aggregation, 
recall the perfect case for the matrix $\mY$:
there is an unknown quality $s_i$
associated with each item $i$ and
 $\mY = \vs \ve^T - \ve \vs^T$.
We now assume that the pairwise comparison matrix
computed in the previous section approximates 
the true $\mY$.  Given such a $\mYhat$, our goal is to complete
it with a rank-2 matrix.  Thus, our objective:
\begin{equation} \label{eq:ranking-mc}
 \MINtwo{}{\normof[2]{\sA(\mX) - \vb}}{\normof[*]{\mX} \le 2}{\mX = -\mX^T}
\end{equation}
where $\sA(\cdot)$ corresponds to the filled entries of $\mYhat$.
We adopt the \algo{lasso} formulation because we want
$\rank(\mX) = 2$, and $\nnormof[*]{\mX}$ underestimates
rank as previously mentioned.
This problem only differs from the standard matrix completion 
problem in one regard: the skew-symmetric constraint.
With a careful choice of solver, this
additional constraint comes ``for-free'' (with a few technical
caveats).  It should also be possible
to use the skew-Lanczos process to exploit
the skew-symmetry in the SVD computation.  

\subsection{Algorithms} \label{sec:mc-algs}

Algorithms for matrix completion
seem to sprout like wildflowers in spring:
\citet{lee2009-admira,cai2008-svt,%
toh2009-proximal-gradient,%
dai2009-set,keshavan2009-grassman,mazumder2009-regularization,%
Jain-2010-SVP}.  Each algorithm
fills a slightly different niche, or improves a performance measure 
compared to its predecessors.  

We first explored crafting our own solver by adapting projection and
thresholding ideas used in these algorithms to the skew-symmetrically
constrained variant.  However, we realized that many algorithms
do not require any modification to solve the problem with the 
skew-symmetric constraint.  This result follows from
properties of skew-symmetric matrices we show below.

Thus, we use the \algo{svp} algorithm by \citet{Jain-2010-SVP}.  
For the matrix completion problem, they found their implementation
outperformed many competitors.  It is scalable
and handles a \algo{lasso}-like objective for a fixed rank approximation.
For completeness, we restate the \algo{svp} procedure in Algorithm~\ref{fig:svp}.

\begin{algorithm} \raggedright
 \caption{Singular Value Projection \cite{Jain-2010-SVP}:
 Solve a matrix completion problem.  We use
  the notation $\Omega(\mX)$ to denote output of 
  $\sA(\mX)$ when $\sA(\cdot)$ is an index set.}
 \label{fig:svp}
 \begin{algorithmic}[1]
  \REQUIRE index set $\Omega$, target values $\vb$, target rank $k$, maximum rank $k$, step length $\eta$, tolerance $\eps$
  \STATE Initialize $\mX\itn{0} = 0$, $t=0$
  \REPEAT 
    \STATE Set $ \mU\itn{t} \mSigma\itn{t} {\mV\itn{t}}^T$ to be 
        the rank-$k$ SVD of a matrix with 
        non-zeros $\Omega$ and values\\ 
        $\Omega(\mX\itn{t}) - \eta (\Omega(\mX\itn{t}) - \vb)$
    \STATE $\mX\itn{t+1} \leftarrow \mU\itn{t} \mSigma\itn{t} {\mV\itn{t}}^T$
    \STATE $t \leftarrow t+1$
  \UNTIL $\nnormof[2]{\Omega(\mX\itn{k}) - \vb} > \eps$
 \end{algorithmic}

\end{algorithm}

\label{sec:mc-algs-ss}

If the constraint $\sA(\mX), \vb$
comes from a skew-symmetric matrix,
then this algorithm produces a skew-symmetric
matrix as well.  Showing this involves
a few properties of skew-symmetric matrices and two lemmas.

We begin by stating a few well-known properties of skew-symmetric
matrices. Let $\mA=-\mA^T$ be skew-symmetric.
Then all the eigenvalues of 
$\mA$ are pure-imaginary and come in complex-conjugate pairs. 
Thus, a skew-symmetric matrix must always have even rank.
Let $\mB$ be a square real-valued matrix, then the closest
skew-symmetric matrix to $\mB$ (in any norm) is 
$\mA = (\mB - \mB^T)/2$.  These results have
elementary proofs.  We continue by characterizing the
singular value decomposition of a skew-symmetric matrix.

\begin{lemma} \label{lem:sssvd}
Let $\mA = -\mA^T$ be an $n \times n$ skew-symmetric matrix
with eigenvalues $i \lambda_1, -i \lambda_1, i \lambda_2, -i \lambda_2, \ldots, 
i \lambda_j, -i\lambda_j$, where $\lambda_i > 0$ and $j = \lfloor n/2 \rfloor$.  
Then the SVD of $\mA$ is given by 
\begin{equation}
  \mA = \mU 
    \begin{sbmatrix} \lambda_1 \\ 
                    & \lambda_1 \\
                    & & \lambda_2 \\
                    & & & \lambda_2 \\
                    & & & & \ddots \\
                    & & & & & \lambda_j \\
                    & & & & & & \lambda_j 
    \end{sbmatrix} \mV^T
\end{equation}
for $\mU$ and $\mV$ given in the proof.
\end{lemma}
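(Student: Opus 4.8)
The plan is to reduce $\mA$ to a real block-diagonal canonical form and then read off the SVD one $2 \times 2$ block at a time. Since $\mA$ is real and skew-symmetric it is normal ($\mA\mA^T = -\mA^2 = \mA^T\mA$), hence unitarily diagonalizable over $\mathbb{C}$ with the purely imaginary eigenvalues $\pm i\lambda_k$ given in the statement. Because $\mA$ is real, its complex eigenvectors occur in conjugate pairs, and I would exploit this to turn the complex eigenbasis into a \emph{real} orthonormal basis in which $\mA$ is block diagonal with $2 \times 2$ blocks $\bmat{0 & \lambda_k \\ -\lambda_k & 0}$, plus a single zero row and column when $n$ is odd.

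Concretely, for each pair I would take a unit eigenvector $\vz = \vp + i\vq$ with $\mA\vz = i\lambda_k \vz$ and split into real and imaginary parts, obtaining $\mA\vp = -\lambda_k\vq$ and $\mA\vq = \lambda_k\vp$. The conjugate $\bar{\vz}$ is then the eigenvector for $-i\lambda_k$; since eigenvectors of a normal matrix for distinct eigenvalues are Hermitian-orthogonal, the condition $\vz^{*}\bar{\vz} = 0$ forces $\normof{\vp} = \normof{\vq}$ and $\vp^T\vq = 0$, while $\vz^{*}\vz = 1$ gives $\normof{\vp}^2 = \normof{\vq}^2 = 1/2$. Thus $\vu = \sqrt{2}\,\vp$ and $\vv = \sqrt{2}\,\vq$ form an orthonormal real pair satisfying $\mA\vu = -\lambda_k\vv$ and $\mA\vv = \lambda_k\vu$. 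Collecting these pairs across all $k$, and appending a null vector if $n$ is odd, produces a real orthogonal $\mQ$ (orthogonality across different pairs again following from normality) with $\mQ^T\mA\mQ$ in the claimed block-diagonal form.

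Finally I would compute the SVD of each block via $\bmat{0 & \lambda_k \\ -\lambda_k & 0} = \mI \,(\lambda_k \mI)\, \bmat{0 & -1 \\ 1 & 0}^T$, so every block contributes two singular values equal to $\lambda_k$ and the right factor is the left factor rotated by a fixed block. Folding the block factors into $\mQ$ yields orthogonal $\mU$ and $\mV$ with $\mA = \mU\mSigma\mV^T$ and $\mSigma = \operatorname{diag}(\lambda_1,\lambda_1,\ldots,\lambda_j,\lambda_j)$, which is exactly the stated decomposition; in doing so I would record the relation $\mV = \mU\mD$, where $\mD$ is block diagonal with blocks $\bmat{0 & -1 \\ 1 & 0}$, since this rigid coupling between the left and right singular vectors is precisely what will later let the skew-symmetric structure survive the \algo{svp} iteration. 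The main obstacle is the bookkeeping in the reduction step, namely verifying that the real and imaginary parts arising from different eigenpairs are mutually orthogonal so that $\mQ$ is genuinely orthogonal; the per-block SVD and the final assembly are then immediate.
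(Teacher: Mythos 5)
Your proof is correct, and it reaches the same waypoint as the paper's proof---a real orthogonal reduction of $\mA$ to block-diagonal form with $2\times 2$ blocks $\begin{sbmatrix} 0 & \lambda_k \\ -\lambda_k & 0 \end{sbmatrix}$, finished off by an explicit per-block SVD---but it gets to that waypoint by a genuinely different route. The paper invokes the Murnaghan--Wintner form of a real matrix, $\mA = \mX \mT \mX^T$ with $\mX$ real orthogonal and $\mT$ real block upper triangular, and then observes that $\mT = \mX^T \mA \mX$ inherits skew-symmetry, which forces $\mT$ to be block diagonal with exactly those skew blocks; one citation does all the reduction work. You instead construct the real orthogonal basis by hand from the complex spectral theorem: normality gives the unitary diagonalization, and the conjugate-pair structure $\vz = \vp + i\vq$ versus $\bar{\vz}$ yields, through the Hermitian orthogonality relations you verify, the orthonormal real pairs $\vu, \vv$ spanning each invariant plane. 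Your route buys self-containedness (only the spectral theorem for normal matrices is needed), an explicit treatment of the zero eigenvalue when $n$ is odd (which the paper's statement and proof gloss over), and it surfaces the cross-pair orthogonality bookkeeping that the canonical form hides; for completeness you should note that when two pairs share the same $\lambda_k > 0$ the argument still goes through, since $\vz_1 \perp \bar{\vz}_2$ follows from $i\lambda_k \neq -i\lambda_k$, with orthogonality inside each eigenspace arranged by Gram--Schmidt. The two per-block SVDs differ only in the (non-unique) choice of orthogonal factors: the paper uses $\begin{sbmatrix} 0 & 1 \\ 1 & 0 \end{sbmatrix}$ and $\begin{sbmatrix} -1 & 0 \\ 0 & 1 \end{sbmatrix}$ where you use the identity and a rotation. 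Your closing remark that $\mV = \mU \mD$ with $\mD$ block diagonal in rotation blocks is a worthwhile addition not made explicit in the paper: this rigid coupling of left and right singular subspaces is precisely what makes the truncation argument of Lemma~\ref{lem:rank-k-sssvd} and the iteration argument of Theorem~\ref{thm:sssvp} preserve skew-symmetry.
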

\begin{proof}
Using the Murnaghan-Wintner form of a real matrix 
\cite{murnaghan1931-canonical-form}, we can
write 
\[ \mA = \mX \mT \mX^T \]
for a \emph{real-valued} orthogonal matrix $\mX$ and
\emph{real-valued} block-upper-triangular
matrix $\mT$, with $2$-by-$2$ blocks along the diagonal.
Due to this form, $\mT$ must also be skew-symmetric.  Thus,
it is a block-diagonal matrix that 
we can permute to the form: 
\[ \mT = \begin{sbmatrix}
           0 & \lambda_1 \\
           -\lambda_1 & 0 \\
           & & 0 & \lambda_2 \\
           & & -\lambda_2 & 0 \\
           & & & & \ddots \\
%            & & & & & 0 & \lambda_j \\
%            & & & & & -\lambda_j & 0
         \end{sbmatrix}. \]
Note that the SVD of the matrix 
\[ \begin{bmatrix} 0 & \lambda_1 \\ -\lambda_1 & 0 \end{bmatrix} = 
\begin{bmatrix} 0 & 1 \\ 1 & 0 \end{bmatrix}
\begin{bmatrix} \lambda_1 & 0 \\ 0 & \lambda_1 \end{bmatrix}
\begin{bmatrix} -1 & 0 \\ 0 & 1 \end{bmatrix}.
\]
We can use this expression to complete the theorem:
\[
\begin{aligned}
\mA & = 
\underbrace{
	\mX 
	\begin{sbmatrix}
	   0 & 1 \\
	   1 & 0 \\
	   & & 0 & 1 \\
	   & & 1 & 0 \\
	   & & & & \ddots \\
% 	   & & & & & 0 & 1 \\
% 	   & & & & & 1 & 0
	\end{sbmatrix}
}_{= \mU} 
\begin{sbmatrix} 
  	\lambda_1 \\ 
	& \lambda_1 \\
	& & \lambda_2 \\
	& & & \lambda_2 \\
	& & & & \ddots \\
% 	& & & & & \lambda_j \\
% 	& & & & & & \lambda_j 
\end{sbmatrix} 
%\\ & \quad
\underbrace{
	\begin{sbmatrix}
	   -1 & 0 \\
	   0 & 1 \\
	   & & -1 & 0 \\
	   & & 0 & 1 \\
	   & & & & \ddots \\
% 	   & & & & & -1 & 0 \\
% 	   & & & & & 0 & 1
	\end{sbmatrix}
	\mX^T
}_{= \mV^T}
. 
\end{aligned} \]
Both the matrices $\mU$ and $\mV$ are real and orthogonal.
Thus, this form yields the SVD of $\mA$.  
\end{proof}

We now use this lemma to show that -- under fairly general conditions --
the best rank-$k$ approximation to a skew-symmetric matrix is 
also skew-symmetric.
\begin{lemma} \label{lem:rank-k-sssvd}
Let $\mA$ be an $n$-by-$n$ skew-symmetric matrix, and let $k = 2j$ 
be even.  Let $\lambda_1 \ge \lambda_2 \ge \ldots \ge \lambda_j > \lambda_{j+1}$
be the magnitude of the singular value pairs.  (Recall that the
previous lemma showed that the singular values come in pairs.) Then the 
best rank-$k$ approximation of $\mA$ in an orthogonally invariant norm
is also skew-symmetric.
\end{lemma}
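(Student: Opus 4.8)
The plan is to reduce everything to the explicit singular value decomposition furnished by Lemma~\ref{lem:sssvd}, combined with the Eckart--Young--Mirsky characterization of best low-rank approximation. The fact I would invoke is Mirsky's theorem: in any orthogonally invariant norm, a best rank-$k$ approximation of a matrix is obtained by truncating its SVD, i.e.\ by retaining the $k$ largest singular values and zeroing the rest. The hypothesis $\lambda_j > \lambda_{j+1}$ supplies a strict gap at the truncation index, namely $\sigma_{2j} = \lambda_j > \lambda_{j+1} = \sigma_{2j+1}$, which makes the top-$2j$ singular subspace unambiguous and hence pins down the truncated matrix uniquely, even though the singular values within each conjugate pair are degenerate.

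First I would write $\mA = \mX \mT \mX^T$ exactly as in Lemma~\ref{lem:sssvd}, where $\mX$ is real orthogonal and $\mT$ is block-diagonal with $2 \times 2$ skew blocks $\left[\begin{smallmatrix} 0 & \lambda_i \\ -\lambda_i & 0 \end{smallmatrix}\right]$, and I would reuse the factored SVD $\mA = \mU \mSigma \mV^T$ exhibited there, with $\mU = \mX \mP$ and $\mV^T = \mQ \mX^T$ for the block matrices $\mP,\mQ$ appearing in that proof and $\mSigma = \operatorname{diag}(\lambda_1,\lambda_1,\lambda_2,\lambda_2,\ldots)$. Because $k = 2j$ is even, truncating $\mSigma$ to its $2j$ largest entries retains exactly the first $j$ conjugate pairs and discards the remainder; let $\mSigma_k$ denote this truncation, so that $\mA_k = \mU \mSigma_k \mV^T$ is a best rank-$k$ approximation in every orthogonally invariant norm.

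The crux is to show that $\mA_k$ is skew-symmetric, and here the even truncation is essential. I would compute $\mA_k = \mX (\mP \mSigma_k \mQ) \mX^T$ and verify blockwise that $\mP \mSigma_k \mQ$ equals the block-diagonal matrix $\mT_k$ obtained from $\mT$ by keeping its first $j$ skew blocks and replacing the others by zero: each retained block reproduces $\left[\begin{smallmatrix} 0 & \lambda_i \\ -\lambda_i & 0 \end{smallmatrix}\right]$, by the identical $2 \times 2$ computation already carried out in Lemma~\ref{lem:sssvd}, while each discarded block vanishes since $\mSigma_k$ is zero there. As $\mT_k$ is block-diagonal with skew and zero blocks, $\mT_k^T = -\mT_k$, and orthogonal congruence preserves this, giving $\mA_k^T = \mX \mT_k^T \mX^T = -\mX \mT_k \mX^T = -\mA_k$. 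This establishes the claim.

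The step I expect to be the main obstacle is conceptual rather than computational: arguing that the optimal truncation is forced to respect the conjugate pairs. Had $k$ been odd, truncating would split a degenerate pair $\lambda_i,\lambda_i$, and the resulting matrix would have odd rank, which is impossible for a skew-symmetric matrix, so skew-symmetry would genuinely fail. The evenness of $k$ together with the gap $\lambda_j > \lambda_{j+1}$ is precisely what guarantees both that the optimal truncation is unambiguous and that it cuts cleanly between two pairs, so that $\mA_k$ inherits the block structure of $\mA$. Everything else is the blockwise multiplication already performed in the proof of Lemma~\ref{lem:sssvd}.
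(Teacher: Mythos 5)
Your proof is correct and follows essentially the same route as the paper's: invoke Eckart--Young--Mirsky for orthogonally invariant norms, use the gap $\lambda_j > \lambda_{j+1}$ to make the truncation well-defined, and truncate the explicit SVD constructed in Lemma~\ref{lem:sssvd}. The only difference is that you spell out the blockwise verification that $\mP \mSigma_k \mQ$ is skew-symmetric, a step the paper simply asserts, so your write-up is a more detailed version of the same argument.
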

\begin{proof}
This lemma follows fairly directly from Lemma~\ref{lem:sssvd}.  Recall
that the best rank-$k$ approximation of $\mA$ in an orthogonally invariant
norm is given by the $k$ largest singular values and vectors.  By assumption
of the theorem, there is a gap in the spectrum between the $k$th and $k$+1-st
singular value.  Thus, taking the SVD form from Lemma~\ref{lem:sssvd}
and truncating to the $k$ largest singular values produces a skew-symmetric
matrix.  
\end{proof}

Finally, we can use this second result to show that the \algo{svp} algorithm
for the \algo{lasso} problem preserves skew-symmetry in all the 
iterates $\mX\itn{k}$.

\begin{theorem} \label{thm:sssvp}
 Given a set of skew-symmetric constraints $\sA(\cdot) = \vb$, the
 solution of the \algo{lasso} problem from the \algo{svp} solver is a skew-symmetric
 matrix $\mX$ if the target rank is even and the dominant singular
 values stay separated as in the previous lemma.
\end{theorem}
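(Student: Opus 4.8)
The plan is to prove by induction on the iteration index $t$ that every iterate $\mX\itn{t}$ produced by \algo{svp} is skew-symmetric; the returned matrix is then skew-symmetric as the last such iterate. First I would record the two structural facts that drive the induction. Because the constraints come from a skew-symmetric matrix, the index set $\Omega$ is symmetric, meaning $(r,s)\in\Omega$ exactly when $(s,r)\in\Omega$, and the prescribed values satisfy $b_{(s,r)} = -b_{(r,s)}$. Writing $\mathcal{P}_\Omega(\cdot)$ for the matrix-valued projection that keeps the entries indexed by $\Omega$ and zeros out the rest, these two facts say precisely that (i) $\mathcal{P}_\Omega$ maps skew-symmetric matrices to skew-symmetric matrices, and (ii) the data matrix $\mathcal{B}$ that carries the values $\vb$ on $\Omega$ and is zero off $\Omega$ is itself skew-symmetric.

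For the base case, $\mX\itn{0} = 0$ is trivially skew-symmetric. For the inductive step, assume $\mX\itn{t} = -{\mX\itn{t}}^T$. The matrix whose rank-$k$ SVD is taken inside the loop has the entries $\Omega(\mX\itn{t}) - \eta(\Omega(\mX\itn{t}) - \vb)$ on $\Omega$ and zeros elsewhere; as a full matrix this is $\mathcal{M}\itn{t} = (1-\eta)\,\mathcal{P}_\Omega(\mX\itn{t}) + \eta\,\mathcal{B}$. By the inductive hypothesis together with (i) and (ii), both $\mathcal{P}_\Omega(\mX\itn{t})$ and $\mathcal{B}$ are skew-symmetric, so $\mathcal{M}\itn{t}$, being a real linear combination of skew-symmetric matrices, is skew-symmetric.

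Now I would apply Lemma~\ref{lem:rank-k-sssvd} to $\mathcal{M}\itn{t}$. Since the target rank $k$ is even and, by hypothesis, the dominant singular values stay separated (there is a gap between the $k$th and $(k{+}1)$st singular values of $\mathcal{M}\itn{t}$), the best rank-$k$ approximation of the skew-symmetric matrix $\mathcal{M}\itn{t}$ is again skew-symmetric. But $\mX\itn{t+1} = \mU\itn{t}\mSigma\itn{t}{\mV\itn{t}}^T$ is exactly this truncated SVD, so $\mX\itn{t+1}$ is skew-symmetric, completing the induction.

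The only place the hypotheses of the theorem do real work is the appeal to Lemma~\ref{lem:rank-k-sssvd}. The evenness of $k$ is what allows the truncation to retain whole conjugate pairs of singular values, and the spectral-gap assumption is what guarantees the truncation does not cut through a pair; if the $k$th and $(k{+}1)$st singular values coincided, the rank-$k$ truncation could retain one vector of a pair and destroy the skew-symmetric structure. Hence the substance of the argument is not the induction itself, which is routine once the structural facts are in hand, but ensuring that the gap condition persists from iteration to iteration so that the lemma may be invoked at every step.
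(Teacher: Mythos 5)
Your proof is correct and follows essentially the same route as the paper's: induction on the \algo{svp} iterates, with Lemma~\ref{lem:rank-k-sssvd} guaranteeing that each rank-$k$ truncation of the (skew-symmetric) gradient-step matrix remains skew-symmetric. In fact your write-up is more careful than the paper's terse argument --- you make explicit the implicit assumptions that $\Omega$ is a symmetric index set with antisymmetric values $\vb$, that the iterated matrix is the linear combination $(1-\eta)\,\mathcal{P}_\Omega(\mX\itn{t}) + \eta\,\mathcal{B}$ of skew-symmetric matrices, and that the spectral-gap hypothesis must hold at every iteration, all of which the paper leaves unstated.
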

\begin{proof}
 In this proof, we revert to the notation $\sA(\mX)$ and use $\sA^*(\vz)$
 to denote the matrix with non-zeros in $\Omega$ and values from $\vz$.
 We proceed by induction on the iterates generated by the \algo{svp} algorithm.
 Clearly $\mX\itn{0}$ is skew-symmetric.  In step 3, we compute
 the SVD of a skew-symmetric matrix: $\sA^*(\sA(\mX\itn{k}) - \vb)$.
 The result, which is the next iterate, is skew-symmetric 
 based on the previous lemma and conditions of this theorem.  
\end{proof}

The \algo{svp} solver thus solves \eqref{eq:ranking-mc} for a fixed
rank problem.
A final step is to extract the scoring vector $\vs$ from a rank-$2$ singular
value decomposition.  If we had the exact matrix $\mY$, then
$(1/n) \mY \ve = \vs - (\vs^T \ve)/n \ve$, which yields
the score vector centered around 0.  Using a simple 
result noted by \citet{Meyer2010-Spread}, then
$\vs = (1/n) \mY \ve$
is also \emph{the best least-squares approximation} to $\vs$
in the case that $\mY$ is not an exact pairwise difference
matrix.  Formally, 
$(1/n) \mY \ve = 
\mathop{\mathrm{argmin}}_{\vs} 
\normof{ \mY^T - (\vs \ve^T - \ve \vs^T)}$.
The outcome that a rank-2 $\mU \mSigma \mV^T$ from
\algo{svp} is not of
the form $\vs \ve^T - \ve \vs^T$
 is quite possible because there are many
rank-2 skew-symmetric matrices that do not have $\ve$ 
as a factor.  However, the above discussion
justifies using $(1/n) \mU \mSigma \mV^T \vs$ 
derived from this
completed matrix.

Our complete ranking procedure is given by Algorithm~\ref{fig:nnra}.

\begin{algorithm}[t]
 \caption{Nuclear Norm Rank Aggregation. The \algo{svp}
 subroutine is given by Algorithm~\ref{fig:svp}.}
 \label{fig:nnra}
 \begin{algorithmic}[1]
   \REQUIRE ranking matrix $\mR$, minimum comparisons $c$
   \STATE Compute $\mY$ from $\mR$ by a
     procedure in Section~\ref{sec:pairwise}.
   \STATE Discard entries in $\mY$ with fewer than $c$ comparisons
   \STATE Let $\Omega$ be the index set for all retained entries in $\mY$ and 
      $\vb$ be the values for these entries
   \STATE $\mU,\mS,\mV$ = \algo{svp}(index set $\Omega$, values $\vb$, rank $2$)
   \STATE Compute $\vs = (1/n) \mU \mS \mV^T \ve$
 \end{algorithmic}
\end{algorithm}

\section{Other Approaches}

Now, we briefly compare our approach with other techniques to compute
ranking vectors from pairwise comparison data.  
An obvious approach is to find the least-squares solution 
$\min_{\vs} \sum_{(i,j) \in \Omega} (Y_{i,j} - (s_i - s_j))^2$.
This is a linear least squares method, and is exactly
what \citet{Massey-1997-Sports} proposed for ranking sports teams.
The related Colley method introduces a bit of regularization
into the least-squares problem~\cite{Colley-2002-bias}.
By way of comparison, the matrix completion approach has the
same ideal objective, however, we compute solutions using
a two-stage process: first complete the matrix, and then
extract scores.

A related methodology with skew-symmetric matrices
underlies recent developments in the 
application of Hodge theory to rank aggregation
 \cite{Jiang-2010-Hodge}.
By analogy with the Hodge decomposition of a vector space,
they propose a decomposition of pairwise rankings into
\emph{consistent}, \emph{globally inconsistent}, and \emph{locally inconsistent}
pieces. Our approach differs because our algorithm applies
without restriction on the comparisons.  
\citet{Freeman-1997-hierarchies} also uses an SVD of
a skew-symmetric matrix to discover a hierarchical
structure in a social network.

We know of two algorithms to directly estimate the item
value from ratings
\cite{dekerchov2007-dynamical-reputation,ho2008-ratings}.
Both of these methods include a technique to
model voter behavior.  They find that skewed
behaviors and inconsistencies in the ratings
require these adjustments.  In contrast, we
eliminate these problems by using the pairwise
comparison matrix.  Approaches
using a matrix or tensor factorization of
the rating matrix directly often have to
determine a rank empirically
\cite{Rendel2009-learning-rankings}.

The problem with the mean rating from Netflix
in Table~\ref{tab:netflix-top-movies}
is often corrected by requiring a minimum
number of rating on an item.  For example,
IMDB builds its top-250 movie list based
on a Bayesian estimate of the mean with at
least $3000$ ratings (\url{imdb.com/chart/top}).
Choosing this parameter is problematic as it
directly excludes items.
In contrast, choosing the minimum number of
comparisons to support an entry in $\mY$
may be easier to justify.

\section{Recoverability} \label{sec:recoverability}

A hallmark of the recent developments on matrix completion
is the existence of theoretical \emph{recoverability} guarantees
(see \citet{candes2009-exact-completion}, for example).
These guarantees give conditions under which the solution to
the optimization problems posed in Section~\ref{sec:ranking-nn}
\emph{is or is nearby} the low-rank matrix from whence the
samples originated.  In this section, we apply a recent theoretical
insight into matrix completion based on operator bases to
our problem of recovering a scoring vector from a skew-symmetric
matrix~\cite{Gross2010-low-rank}. We only treat the noiseless
problem to present a simplified analysis.  Also, the notation in
this section differs slight from the rest of the manuscript, 
in order to match the statements in \citet{Gross2010-low-rank} 
better.  In particular, $\Omega$ is not necessarily the 
index set, $\imath$ represents $\sqrt{-1}$, and
most of the results are for the complex field.

The goal is this section is to apply Theorem 3 from \citet{Gross2010-low-rank}
to skew-symmetric matrices arising from score difference vectors.  We restate
that theorem for reference.
\begin{theorem}[Theorem 3, \citet{Gross2010-low-rank}]
\label{thm:recovery}
Let $\mA$ be a rank-$r$ Hermitian matrix with coherence $\nu$ with respect to an 
operator basis $\{\mW_i\}_{i=1}^{n^2}$.  Let $\Omega \subset [1, n^2]$
be a random set of size $|\Omega| > O(nr\nu(1+\beta)(\log n)^2)$. 
Then the solution of 
\[ \MINone{}{ \normof[*]{\mX} }{ \tr(\mX^* \mW_i) = \tr(\mA^* \mW_i) \quad i \in \Omega} \]
is unique and is equal to $\mA$ with probability at least $1 - n^{-3}$.
\end{theorem}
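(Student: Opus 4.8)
Since this is verbatim Theorem~3 of \citet{Gross2010-low-rank}, the paper itself would establish it only by citation; nevertheless, the route I would take is the \emph{dual certificate} method adapted to an arbitrary operator basis. First I would fix the geometry: write $\mA = \mU \mSigma \mU^*$ of rank $r$, let $T$ be the tangent space to the rank-$r$ Hermitian matrices at $\mA$ (matrices of the form $\mU \mX^* + \mX \mU^*$), and let $\mathcal{P}_T, \mathcal{P}_{T^\perp}$ be the orthogonal projectors onto $T$ and its complement. Encode the measurements by the self-adjoint operator $\mathcal{R}_\Omega = \sum_{i \in \Omega} \langle \mW_i, \cdot \rangle \mW_i$. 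The standard Karush--Kuhn--Tucker analysis for nuclear-norm minimization reduces the entire theorem to producing a certificate $\mY$ lying in the span of the observed basis elements $\{\mW_i : i \in \Omega\}$ whose tangential part matches the sign, $\mathcal{P}_T(\mY) = \sign(\mA)$, and whose normal part is a strict contraction, $\normof[2]{\mathcal{P}_{T^\perp}(\mY)} < 1$.

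The probabilistic heart is a \emph{near-isometry} on the tangent space. Using the coherence hypothesis to bound $\normof[2]{\mathcal{P}_T(\mW_i)}$ uniformly, I would invoke an operator Bernstein (matrix concentration) inequality to show that, for $|\Omega|$ of the stated order, $\mathcal{P}_T \mathcal{R}_\Omega \mathcal{P}_T$ concentrates around $(|\Omega|/n^2)\,\mathcal{P}_T$ to within, say, a factor-two spectral deviation. The coherence $\nu$ enters exactly here: it is what keeps the per-sample operator variance small enough for Bernstein to apply with only the claimed sample count. This near-isometry both certifies invertibility of the restricted operator and supplies the contraction factor used below.

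To build $\mY$ I would use Gross's \emph{golfing scheme}: split $\Omega$ into $\ell = O(\log n)$ independent batches $\Omega_1, \ldots, \Omega_\ell$, set $\mQ_0 = \sign(\mA)$, and iterate $\mY_k = \mY_{k-1} + \tfrac{n^2}{|\Omega_k|}\mathcal{R}_{\Omega_k}(\mQ_{k-1})$ with residual $\mQ_k = \sign(\mA) - \mathcal{P}_T(\mY_k)$. The near-isometry forces each batch to shrink the tangential residual geometrically, $\normof[F]{\mQ_k} \le \tfrac12 \normof[F]{\mQ_{k-1}}$, so after $O(\log n)$ swings $\mathcal{P}_T(\mY_\ell)$ agrees with $\sign(\mA)$ up to $n^{-O(1)}$. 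Simultaneously a vector-Bernstein bound of the form $\normof[2]{\mathcal{P}_{T^\perp}\mathcal{R}_{\Omega_k}(\mQ_{k-1})} \lesssim \normof[F]{\mQ_{k-1}}$ controls each increment's normal part, and because the $\normof[F]{\mQ_{k-1}}$ decay geometrically their contributions sum to a quantity strictly below $1$.

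The main obstacle lies entirely in the concentration estimates: establishing the tangent-space near-isometry and the per-batch operator-norm bounds requires matrix and vector Bernstein inequalities whose variance parameters are expressed through the coherence $\nu$. The two logarithmic factors in $|\Omega|$ are precisely the cost of this step --- one $\log n$ for the number of golf batches needed to drive the tangential residual below $n^{-O(1)}$, and a second for the union bound over those batches that makes every high-probability event hold at once at the $1 - n^{-3}$ level. Everything else (the KKT reduction, the geometric contraction, and the final assembly of $\mY$) is bookkeeping once those bounds are in hand.
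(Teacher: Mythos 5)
This theorem is not proved in the paper at all---it is imported verbatim from \citet{Gross2010-low-rank} and restated purely for reference, so the paper's ``proof'' is the citation itself, exactly as you anticipated. Your sketch is a faithful high-level account of Gross's actual argument (dual certificate built by the golfing scheme, tangent-space near-isometry via operator Bernstein with coherence controlling the variance, and the $(\log n)^2$ factor split between batch count and per-batch concentration), so there is nothing to correct and no divergence to report.
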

The definition of coherence follows shortly.  On the surface,
this theorem is useless for our application.  The matrix we wish
to complete is not Hermitian, it's skew-symmetric.  
However, given a real-valued
skew-symmetric matrix $\mY$, the matrix $\imath \mY$ is Hermitian;
and hence, we will work to apply this theorem to this 
particular Hermitian matrix.
Again, we adopt this approach for simplicity.  It is likely
that a statement of Theorem~\ref{thm:recovery} 
with Hermitian replaced with
skew-Hermitian also holds, although verifying this would
require a reproduction of the proof from \citet{Gross2010-low-rank}.
 
%Let $\mathcal{H}$ be the following operator basis for Hermitian
%matrices: 
The following theorem gives us a condition
for recovering the score vector using matrix completion.
As stated, this theorem is not particularly useful 
because $\vs$ may be recovered from noiseless measurements
by exploiting the special structure of the rank-2
matrix $\mY$.  For example, if we know $Y_{i,j} = s_i - s_j$ 
then given $s_i$ we can find $s_j$.  This argument may
be repeated with an arbitrary starting point as long 
as the known index set corresponds to a connected set
over the indices.  
Instead we view the following theorem as
providing intuition for the noisy problem.  

Consider the operator basis for Hermitian matrices:
\[ \begin{aligned}
\mathcal{H} & = \mathcal{S} \cup \mathcal{K} \cup \mathcal{D} \text{ where } \\
\mathcal{S} & = \{ 1/\sqrt{2}(\ve_i \ve_j^T + \ve_j \ve_i^T) : 1 \le i < j \le n \}; \\
\mathcal{K} & = \{ \imath/\sqrt{2}(\ve_i \ve_j^T - \ve_j \ve_i^T) : 1 \le i < j \le n \}; \\
\mathcal{D} & = \{ \ve_i \ve_i^T : 1 \le i \le n \}.
\end{aligned} \] 
\begin{theorem}
\label{thm:score-recovery}
Let $\vs$ be centered, i.e., $\vs^T \ve = 0$.
Let $\mY = \vs \ve^T - \ve \vs^T$ where  
$\theta = \max_i s_i^2/(\vs^T \vs)$ and
   $\rho = ((\max_i s_i) - (\min_i s_i))/\nnormof{\vs}$.
Also, let $\Omega \subset \mathcal{H}$ be a random set of 
elements with size $|\Omega| \ge O(2n\nu (1+\beta) (\log n)^2)$
where $\nu = \max( (n\theta + 1)/4, n \rho^2 )$. 
Then the solution of 
\[ \MINone{}{\normof[*]{\mX}}{%
   \tr(\mX^* \mW_i) = \tr((\imath \mY)^* \mW_i), \quad \mW_i \in \Omega}
\]
is equal to $\imath \mY$ with probability at least $1-n^{-\beta}$.	
\end{theorem}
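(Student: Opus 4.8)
The plan is to invoke Theorem~\ref{thm:recovery} with the Hermitian matrix $\imath\mY$ playing the role of $\mA$, so that the entire argument reduces to (i) checking the structural hypotheses and (ii) computing the coherence $\nu$ of $\imath\mY$ with respect to the basis $\mathcal{H}=\mathcal{S}\cup\mathcal{K}\cup\mathcal{D}$. For (i): because $\mY$ is real and skew-symmetric, $(\imath\mY)^*=-\imath\mY^T=\imath\mY$, so $\imath\mY$ is Hermitian; and since $\vs^T\ve=0$ with $\vs\neq 0$, the vectors $\vs$ and $\ve$ are linearly independent, so $\mY=\vs\ve^T-\ve\vs^T$ (and hence $\imath\mY$) has rank exactly $r=2$. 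Substituting $r=2$ into the sample bound $|\Omega|>O(nr\nu(1+\beta)(\log n)^2)$ of Theorem~\ref{thm:recovery} already produces the factor $2n\nu$ in the statement, and the $\beta$-dependent failure probability is inherited from the corresponding $(1+\beta)$ oversampling factor in Gross's theorem. Everything therefore hinges on showing $\nu=\max((n\theta+1)/4,\,n\rho^2)$.

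First I would pin down the eigenstructure of $\imath\mY$. Since $\vs$ is centered, $\{\vs/\nnormof{\vs},\,\ve/\sqrt{n}\}$ is an orthonormal basis for the range $U$ of $\mY$, and the direct computations $\mY\vs=-\nnormof{\vs}^2\ve$ and $\mY\ve=n\vs$ show that the two nonzero eigenvalues of $\imath\mY$ are $\pm\sqrt{n}\,\nnormof{\vs}$, equal in magnitude. Two consequences drive the rest of the proof: the orthogonal projector onto $U$ is $P_U=\vs\vs^T/\nnormof{\vs}^2+\ve\ve^T/n$, and, because both eigenvalues share the same magnitude, the sign matrix is simply $\sign(\imath\mY)=\imath\mY/(\sqrt{n}\,\nnormof{\vs})$.

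Next I would evaluate the two quantities that define Gross's coherence against each family in $\mathcal{H}$. For the sign condition I compute the trace $\tr[(\ve_i\ve_j^T-\ve_j\ve_i^T)\mY]=2(s_j-s_i)$, from which the symmetric family $\mathcal{S}$ and the diagonal family $\mathcal{D}$ contribute nothing while each skew element $\mW\in\mathcal{K}$ gives $\absof{\tr(\mW^*\sign(\imath\mY))}^2=2(s_i-s_j)^2/(n\nnormof{\vs}^2)$, whose maximum over $i,j$ is $2\rho^2/n$; matching this against the sign-part normalization ($\nu r/n^2$ with $r=2$) yields the requirement $\nu\ge n\rho^2$. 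For the local condition I compute $\nnormof{P_U\ve_a}^2=s_a^2/\nnormof{\vs}^2+1/n$, whose maximum over $a$ is $(n\theta+1)/n$; matching this against the projector-part normalization ($2\nu r/n$ with $r=2$) yields $\nu\ge(n\theta+1)/4$. Taking the larger of the two gives the claimed $\nu$, and Theorem~\ref{thm:recovery} then certifies exact recovery of $\imath\mY$.

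I expect the main obstacle to be reconciling these computations with the precise form of Gross's coherence: the definition is stated for a general operator basis, so I must verify that the local condition genuinely reduces to the column-space quantity $\max_a\nnormof{P_U\ve_a}^2$ uniformly over all three families $\mathcal{S},\mathcal{K},\mathcal{D}$ (handling the cross terms of $\mX\mapsto P_U\mX+\mX P_U-P_U\mX P_U$ for the off-diagonal symmetric and skew elements), and that the normalization constants line up to produce exactly $1/4$ and $n\rho^2$ rather than values off by small factors. The trace identities themselves are routine; the care is entirely in the bookkeeping of the coherence definition and in confirming that no family of basis elements forces a larger $\nu$ than the two terms already identified.
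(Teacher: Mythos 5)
Your proposal is correct and takes essentially the same route as the paper: apply Gross's Theorem~\ref{thm:recovery} to the rank-$2$ Hermitian matrix $\imath\mY$ and verify coherence with respect to $\mathcal{H}$ using $\sign(\imath\mY)=\imath\mY/(\sqrt{n}\,\nnormof{\vs})$ and the range projector, which yields exactly $\nu=\max\bigl((n\theta+1)/4,\;n\rho^2\bigr)$ from the same trace identities. Your bookkeeping is in fact slightly more careful than the paper's, which contains two harmless slips that you implicitly correct: it writes the projector as $\vs\vs^T/(\vs^T\vs)-\frac{1}{n}\ve\ve^T$ (the sign should be $+$, as you have it), and it claims $\tr(\mK_p\,\mU\mU^*\,\mK_p)=0$ for the skew basis elements, whereas this trace actually equals $\tfrac{1}{2}\bigl((\mU\mU^*)_{ii}+(\mU\mU^*)_{jj}\bigr)$ --- the same value as for the symmetric elements, which is precisely the ``cross-term'' concern your final paragraph raises, and which your bound $\max_a \nnormof{\mU\mU^*\ve_a}^2 \le (n\theta+1)/n$ absorbs without changing $\nu$.
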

The proof of this theorem follows directly by
Theorem~\ref{thm:recovery} if $\imath \mY$ has coherence
$\nu$ with respect to the basis $\mathcal{H}$.
We now show this result.
\begin{definition}[Coherence, \citet{Gross2010-low-rank}] 
Let $\mA$ be $n \times n$, rank-$r$,
and Hermitian.  Let $\mU\mU^*$ be an orthogonal projector
onto $\mathop{\mathrm{range}}(\mA)$.
Then $\mA$ has
coherence $\nu$ with respect to an operator basis 
$\{ \mW_i \}_{i=1}^{n^2}$ if both
\[ \begin{aligned}
\max\nolimits_i \tr(\mW_i \mU \mU^* \mW_i) & \le 2 \nu r/n, \text{ and} \\
\max\nolimits_i \tr(\sign(\mA) \mW_i)^2 & \le \nu r/n^2. \\
\end{aligned} \]
\end{definition}

For $\mA = \imath \mY$ with $\vs^T \ve = 0$:
\[ 
%\begin{aligned}
\mU\mU^* = \frac{\vs \vs^T}{\vs^T \vs} - \frac{1}{n} \ve \ve^T \text{ and }
\sign(\mA) = \frac{1}{\normof{\vs}\sqrt{n}} \mA.
%\end{aligned} 
\]
Let $\mS_p \in \mathcal{S}$, $\mK_p \in \mathcal{K}$, and 
$\mD_p \in \mathcal{D}$.  Note that 
because $\sign(\mA)$ is Hermitian with no real-valued
entries, both
quantities $\tr(\sign(\mA) \mD_i)^2 $ and
$\tr(\sign(\mA) \mS_i)^2 $ are $0$.
Also, because $\mU \mU^*$ is symmetric, 
$\tr(\mK_i \mU \mU^* \mK_p) = 0$.
%\[ \begin{aligned}
%\tr(\sign(\mA) \mD_i)^2 & = 0 
%\tr(\mK_i \mU \mU^* \mK_i) & = 0 & \tr(\sign(\mA) \mS_i)^2 & = 0
%\end{aligned} \]
The remaining
basis elements satisfy:
\[ 
\begin{aligned}
\tr(\mS_p \mU \mU^* \mS_p) & = \frac{1}{n} + \frac{s_i^2 + s_j^2}{2 \vs^T \vs} 
	\le (1/n) + \theta\\
\tr(\mD_p \mU \mU^* \mD_p) & = \frac{1}{n} + \frac{s_i^2}{\vs^T \vs} 
	\le (1/n) + \theta\\	
\tr(\sign(\mA) \mK_p)^2 &= \frac{2(s_i - s_j)^2}{n \vs^T \vs} \le (2/n)\rho^2.\\
\end{aligned}
\]
Thus, $\mA$ has coherence $\nu$
with $\nu$ from Theorem~\ref{thm:score-recovery}
and with respect to $\mathcal{H}$.  And we have 
our recovery result.  Although, 
this theorem provides little practical benefit unless
both $\theta$ and $\rho$ are $O(1/n)$,
which occurs when $\vs$ is nearly uniform.

\section{Results} \label{sec:results}
We implemented and tested this procedure in two synthetic
scenarios, along with Netflix, movielens, and Jester
joke-set ratings data.  In the interest of space, we only present
a subset of these results for Netflix.

\subsection{Recovery} \label{sec:recovery}

\begin{figure}
\includegraphics[width=0.49\linewidth]{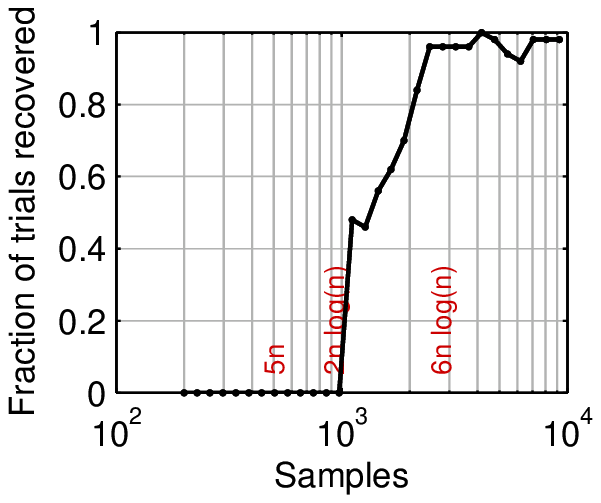}
\includegraphics[width=0.49\linewidth]{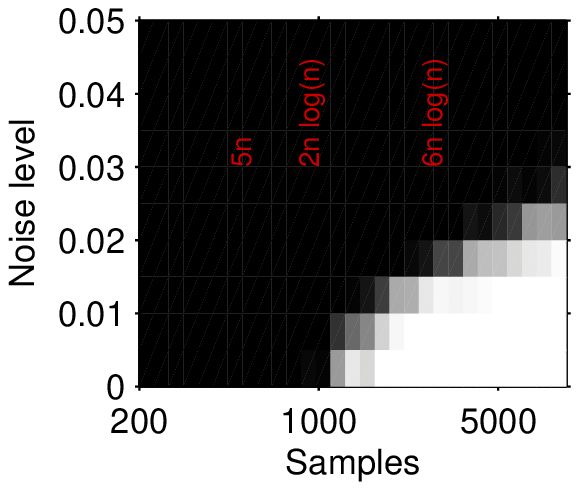}
\caption{An experimental study of the recoverability
of a ranking vector.  These show that we need
about 6n log n entries of $\mY$ to get good recovery
in both the noiseless (left) and noisy (right) case.
See \S\ref{sec:recovery} for more information.}
\label{fig:recovery}
\end{figure}

The first experiment is an empirical study
of the recoverability of the score vector in
the noiseless and noisy case.  
In the noiseless case, Figure~\ref{fig:recovery} (left),
we generate
a score vector with uniformly distributed random scores between
0 and 1.  These are used to construct
a pairwise comparison matrix $\mY = \vs \ve^T - \ve \vs^T$.
We then sample elements of this matrix uniformly at
random and compute the difference between
the true score vector $\vs$ and the output of
steps 4 and 5 of Algorithm~\ref{fig:nnra}.  If the 
relative $2$-norm
difference between these vectors is less than $10^{-3}$,
we declare the trial recovered.  For 
$n=100$, the figure shows that, once
the number of samples is about $6n \log n$, 
the correct $\vs$ is recovered in nearly 
all the 50 trials.

Next, for the noisy case, we generate 
a uniformly spaced score vector between
0 and 1.  Then $\mY = \vs \ve^T - \ve \vs^T + \eps \mE$,
where $\mE$ is a matrix of random normals.  
Again, we sample elements of this matrix randomly,
and declare a trial successful if the \emph{order}
of the recovered score vector is identical
to the true order.  
In Figure~\ref{fig:recovery} (right),
we indicate the fractional of successful trials
as a gray value between black (all failure) 
and white (all successful).  Again, the algorithm
is successful for a moderate noise level, i.e.,
the value of $\eps$, when the number of samples
is larger than $6n \log n$.

\begin{figure}
\includegraphics[width=0.49\linewidth]{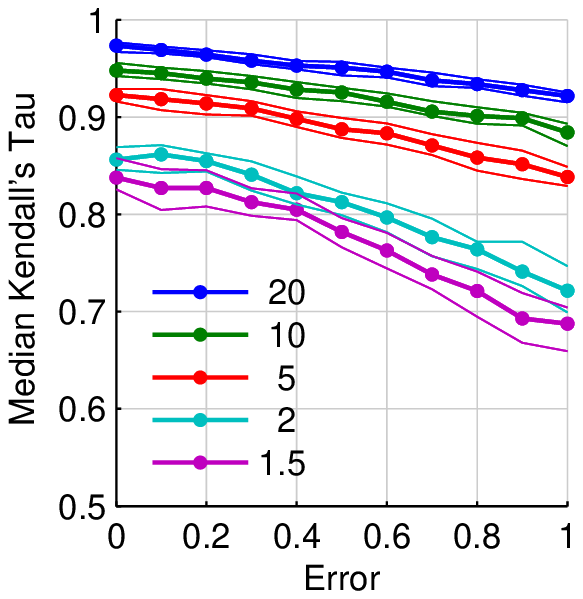}
\includegraphics[width=0.49\linewidth]{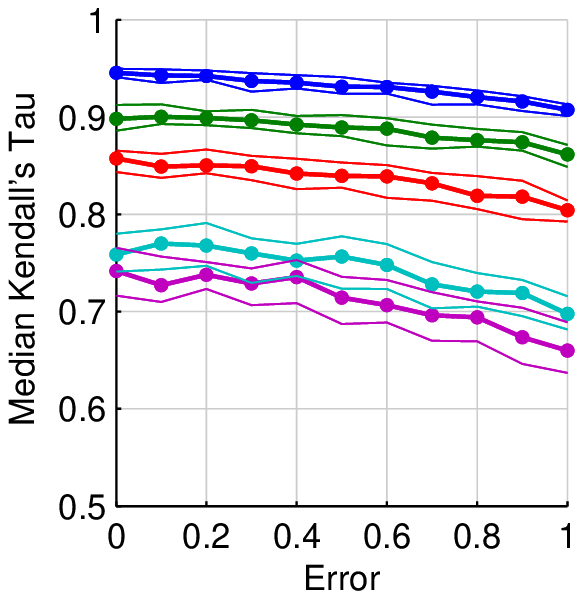}
\caption{The performance of our algorithm (left)
and the mean rating (right) to recovery
the ordering given by item scores 
in an item-response theory model
with 100 items and 1000 users.  
The various thick lines 
correspond to average number of ratings 
each user performed (see the in place legend).
See \S\ref{sec:synthetic} for more information}
\label{fig:synthetic}
\end{figure}

\subsection{Synthetic} \label{sec:synthetic}

Inspired by \citet{ho2008-ratings}, we investigate
recovering item scores in an item-response scenario.
Let $a_i$ be the center of user $i$'s rating scale,
and $b_i$ be the rating sensitivity of user $i$.
Let $t_i$ be the intrinsic score of item $j$.  
Then we generate ratings from users on items
as: 
\[ R_{i,j} = L[ a_i + b_i t_j + E_{i,j} ] \]
where $L[\alpha]$ is the discrete levels function: 
\[ L[\alpha] = 
\begin{cases} 
  1 & \alpha < 1.5 \\
	2 & 1.5 \le \alpha < 2.5 \\
	3 & 2.5 \le \alpha < 3.5 \\
	4 & 3.5 \le \alpha < 4.5 \\
	5 & 4.5 \le \alpha, \end{cases} \]
and $E_{i,j}$ is a noise parameter.	
In our experiment, we draw $a_i \sim N(3,1)$, 
$b_i \sim N(0.5, 0.5)$, $t_i \sim N(0.1, 1)$,
and $E_{i,j} \sim \eps N(0,1)$.  Here,
$N(\mu,\sigma)$ is a standard normal, and
$\eps$ is a noise
parameter.  As input to our algorithm, 
we sample ratings uniformly at random by
specifying a desired number of average ratings
per user.  We then look at
the Kendall $\tau$ correlation coefficient
between the true scores $t_i$ and the output
of our algorithm using the arithmetic mean
pairwise aggregation.  A $\tau$ value of
1 indicates a perfect ordering correlation
between the two sets of scores.

Figure~\ref{fig:synthetic} shows the results
for $1000$ users and $100$ items with 
$1.1,1.5, 2, 5, $ and $10$ ratings 
per user on average.  We also vary the
parameter $\eps$ between $0$ and $1$.
Each thick line with markers plots the 
median value of $\tau$ in 50 trials.  The
thin adjacency lines show the $25$th and
$75$th percentiles of the 50 trials.  
At all error levels, our algorithm
outperforms the mean rating.  Also,
when there are few ratings per-user
and moderate noise, 
our approach is considerably more
correlated with the true score.  
This evidence supports the
anecdotal results from Netflix in 
Table~\ref{tab:netflix-top-movies}.

\subsection{Netflix}
\label{sec:netflix}

See Table~\ref{tab:netflix-top-movies}
for the top movies produced by our technique in a few circumstances
using all users.  The arithmetic mean results in that table
use only elements of $\mY$ with at least $30$ pairwise comparisons
(it is a \texttt{am all 30} model in the code below).
And see Figure~\ref{fig:netflix-residuals} for an analysis
of the residuals generated by the fit for different constructions
of the matrix $\mYhat$.  
Each residual evaluation of Netflix is described by a code.
For example, \texttt{sb all 0} is a strict-binary pairwise
matrix $\mYhat$ from all Netflix users and $c=0$ in Algorithm~\ref{fig:nnra}
(i.e. accept all pairwise comparisons).  Alternatively,
\texttt{am 6 30} denotes an arithmetic-mean pairwise matrix $\mYhat$
from Netflix users with at least 6 ratings, where each entry in $\mYhat$
had 30 users supporting it.  The other abbreviations are
\texttt{gm}: geometric mean; \texttt{bc}: binary comparison; and
\texttt{lo}: log-odds ratio.

These residuals show that we get better rating
fits by only using frequently compared movies, but that there 
are only minor changes in the fits when excluding users that
rate few movies.  The difference between
the score-based residuals $\normof{\Omega(\vs \ve^T - \ve
\vs^T)-\vb}$ (red points) and the \algo{svp} residuals 
$\normof{\Omega(\mU \mS \mV^T)-\vb}$ (blue points) show that 
excluding comparisons leads to ``overfitting'' 
in the \algo{svp} residual.  This suggests that increasing
the parameter $c$ should be done with care and good
checks on the residual norms.

\enlargethispage{\baselineskip}
To check that a rank-$2$ approximation is reasonable,
we increased the target rank in the \algo{svp} solver
to $4$ to investigate.  For the arithmetic mean (6,30) 
model, the relative residual at rank-$2$ is $0.2838$ and
at rank-$4$ is $0.2514$.  Meanwhile, the nuclear
norm increases from around 14000 to around 17000.
%  \begin{center}
%   \begin{tabular}{lll}
%    \toprule
%    Model & Rank & RMSE  \\
%    \midrule
%    Arithmetic mean (6,100) & 2 & 0.174 \\
%    Arithmetic mean (6,100) & 4 & 0.154 \\ 
%    \bottomrule
%   \end{tabular}
%  \end{center}
These results show that the change in the fit is minimal
and our rank-2 approximation and its scores
should represent a reasonable ranking.

 \begin{figure}[t]
  \centering
  \includegraphics[width=0.8\linewidth]{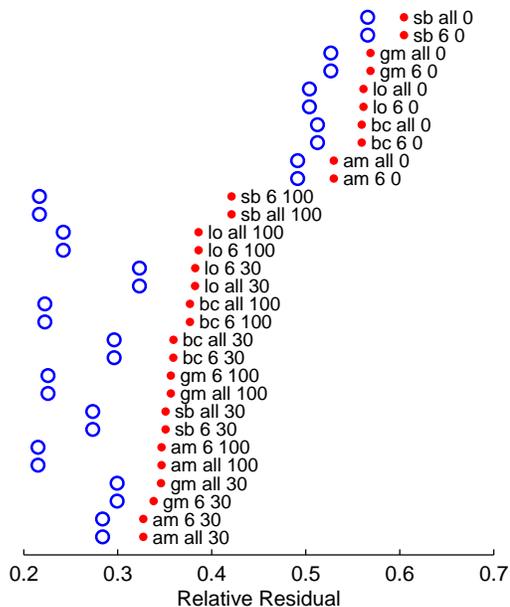}
  \caption{The labels on each residual show the method to generate
   the pairwise scores and how we truncated the Netflix data. 
	 Red points are the residuals from the scores, and blue 
	 points are the final residuals from the SVP algorithm.
	 Please see the discussion in \S\ref{sec:netflix}.} 
   \label{fig:netflix-residuals}
	 \vspace{-\baselineskip}
 \end{figure}

\section{Conclusion}

%One of the problems with rank aggregation methods
%is computational complexity.  

Existing principled techniques such
as computing a Kemeny optimal ranking or finding a minimize feedback 
arc set are NP-hard.  These approaches
are inappropriate in large scale rank aggregation settings.
Our proposal is (i) measure pairwise scores $\mYhat$ and (ii)
solve a matrix completion problem to determine the quality
of items.  This idea is both principled and functional with
significant missing data.  The results of our rank aggregation
on the Netflix problem (Table~\ref{tab:netflix-top-movies}) reveal
popular and high quality movies.  These are interesting results
and could easily have a home on a ``best movies in Netflix''
web page.  Computing a rank aggregation
with this technique is not NP-hard.  It only requires
solving a convex optimization problem with a unique global
minima.  Although we did not record computation times,
the most time consuming piece of work is computing the pairwise
comparison matrix $\mY$.  In a practical setting, this could
easily be done with a MapReduce computation.

To compute these solutions, we adapted the \algo{svp} solver
for matrix completion \cite{Jain-2010-SVP}.  This process
involved (i) studying the singular value decomposition of a 
skew-symmetric matrix (Lemmas~\ref{lem:sssvd} and \ref{lem:rank-k-sssvd})
and (ii) showing that the \algo{svp} solver preserves a skew-symmetric
approximation through its computation (Theorem~\ref{thm:sssvp}).
Because the \algo{svp} solver computes with an explicitly chosen rank,
these techniques work well for large scale rank aggregation problems.

We believe the combination of pairwise aggregation and matrix completion
is a fruitful direction for future research.  We
plan to explore optimizing the \algo{svp} algorithm to exploit
the skew-symmetric constraint,
extending our recovery result to the noisy case,
and investigating additional data.
%Let us highlight a few
%directions we plan to explore.  
%Second, our current technique of using
%the \algo{svp} solver solves \eqref{eq:ranking-mc}, but 
%does not fully exploit the structure of
%the skew-symmetric constraint.  We can \emph{save} work
%by exploiting this constraint.  The SVD of a skew-symmetric
%matrix has more structure than a generic SVD 
%(see Lemma~\ref{lem:sssvd}).  Using a skew-Lanczos process
%(see \citet{greif2009-skew-symmetric} for a recent
%treatment) to compute
%a skew-symmetric SVD exploits this structure.  We plan
%to develop an implicitly restarted skew-symmetric SVD
%routine for this task.  
%Finally, we hope to explore these ideas on more datasets.

\let\thefootnote\relax
\footnotetext{\textit{Funding.} \scriptsize
David F.\ Gleich was supported in 
part by the Natural Sciences and Engineering Research
 Council of Canada along with the Department of Energy's 
John von Neumann fellowship.
Lek-Heng Lim acknowledges the support of NSF CAREER Award DMS 1057064.
}

\bibliography{all-bibliography}
\bibliographystyle{abbrvnat}

\end{document}